%
\documentclass[runningheads]{llncs}
\usepackage[T1]{fontenc}
%

\usepackage[utf8]{inputenc}
\usepackage[T1]{fontenc}
\usepackage[english]{babel}
\usepackage{amsmath,amssymb,amsfonts,siunitx,commath}
\usepackage{tikz,url}
\usepackage{geometry}
\usepackage{mathtools}
\mathtoolsset{showonlyrefs}
\usepackage{subcaption}
\usepackage{algorithm}
\usepackage{algpseudocode}
\usepackage{enumerate}
\usepackage{listings}
\usepackage{hyperref}

\newcommand{\R}{\mathbb{R}}
\newcommand{\N}{\mathbb{N}}
\renewcommand{\d}{\mathrm{d}}

\renewcommand{\P}{\mathcal{P}}
\renewcommand{\S}{\mathcal{S}}

\newcommand{\T}{\mathrm{T}}

\newcommand{\M}{\mathcal M}

\newcommand{\F}{\mathcal F}

\newcommand{\V}{\mathcal V}

\mathtoolsset{showonlyrefs}

\DeclareMathOperator{\dom}{dom}
\DeclareMathOperator{\Id}{Id}

\newcommand{\opt}{\mathrm{opt}}

\begin{document}

\title{Wasserstein Gradient Flows of the Discrepancy\\ 
  with Distance Kernel on the Line\thanks{Supported by
    the German Research Foundation (DFG)
    [grant numbers STE571/14-1, STE 571/16-1]
    and
    the Federal Ministry of Education and Research (BMBF, Germany)
    [grant number 13N15754].
  }}

\author{Johannes Hertrich \and Robert Beinert \and Manuel Gr\"af \and
  Gabriele Steidl}
\authorrunning{J. Hertrich et al.}
\institute{TU Berlin, Institute of Mathematics, Stra{\ss}e des 17. Juni 136, 10623 Berlin, Germany
  \email{\{hertrich,beinert, graef,steidl\}@math.tu-berlin.de}\\
  \url{https://tu.berlin/imageanalysis/}}

\maketitle              

\begin{abstract}
This paper provides results on
Wasserstein gradient flows between measures on the real line.
Utilizing the isometric embedding 
of the Wasserstein space $\P_2(\R)$ 
into the Hilbert space $L_2((0,1))$, 
Wasserstein gradient flows of functionals on 
$\P_2(\R)$ can be characterized as subgradient flows of associated functionals
on $L_2((0,1))$.
For the maximum mean discrepancy functional
$\mathcal F_\nu \coloneqq \mathcal D^2_{K}(\cdot, \nu)$
with the non-smooth negative distance kernel $K(x,y) = -|x-y|$,
we deduce a formula for the associated functional.
This functional appears to be convex, and we show that
$\mathcal F_\nu$ is convex along (generalized) geodesics. 
For the Dirac measure $\nu = \delta_q$, $q \in \R$ as end point of the flow,
this enables us to determine the Wasserstein gradient flows analytically. 
Various examples of Wasserstein gradient flows are given for illustration.
  
  \keywords{
	Maximum Mean Discrepancy 
	\and 
	Wasserstein gradient flows \and Riesz kernel.}
\end{abstract}

\section{Introduction}
Gradient flows provide a powerful tool for computing the minimizers of modeling functionals in certain applications.
In particular, gradient flows on the Wasserstein space  
are an interesting field of research that combines optimization with (stochastic) dynamical systems 
and differential geometry.
For a good overview on the theory, we refer to the books of Ambrosio, Gigli and Savar\'e \cite{BookAmGiSa05},
and Santambrogio \cite{S2015}. 
Besides Wasserstein gradient flows of the Kullback--Leibler (KL) functional $\text{KL}(\cdot,\nu)$
and the associated Fokker--Planck equation related to the overdamped Langevin dynamics, which
were extensively examined in the literature, see, e.g., \cite{JKO1998,Ot01,Pav2014},
flows of maximum mean discrepancy (MMD) functionals $\mathcal F_\nu \coloneqq \mathcal D^2_{K}(\cdot, \nu)$
became popular in machine learning \cite{ArKoSaGr19} and image processing \cite{EGNS2021}.
On the other hand, MMDs were used as loss functions in generative adversarial networks \cite{BSAG2018,DRG2015,LCCYP2017}.
Wasserstein gradient flows of MMDs 
are not restricted to absolutely continuous measures and have a rich structure depending on the kernel.
So the authors of \cite{ArKoSaGr19} showed that for smooth kernels $K$, 
particle flows are indeed Wasserstein gradient flows meaning that
Wasserstein flows starting at an empirical measure remain empirical measures and coincide with usual
gradient descent flows in $\R^d$. 
The situation changes for non-smooth kernels like the negative distance, where 
empirical measures can become absolutely continuous ones and conversely, i.e. particles may explode.
The concrete behavior of the flow depends also on the dimension, see \cite{CaHu17,ChSaWo22b,GCO2021,HGBS2022}.
The crucial part is the treatment of the so-called interaction energy within the discrepancy, which is repulsive and responsible for the
proper spread of the measure. This nicely links to another field of mathematics, namely potential theory \cite{BookLa72,BookSaTo97}.

In this paper, we are just concerned with Wasserstein gradient flows on the real line.
Optimal transport techniques that reduce the original transport to those on the
line were successfully used in several applications 
\cite{AABC2017,BBG2022,bonneel:hal-00881872,CCST2021,KPR2016,PKKR2017}.
When working on $\R$, we can exploit quantile functions of measures to embed the Wasserstein space $\P_2(\R)$ 
into the Hilbert space of (equivalence classes) of square integrable functions $L_2((0,1))$. 
Then, instead of dealing with functionals on $\P_2(\R)$,
we can just work with associated functionals which are uniquely defined on a cone of $L_2((0,1))$.
If the associated functional is convex, we will see that the original one is convex along 
(generalized) geodesics, which is a crucial property for
the uniqueness of the Wasserstein gradient flow.
Furthermore, we can characterize Wasserstein gradient flows
using regular subdifferentials in $L_2( (0,1) )$.
Note that the special case of Wasserstein gradient flows of the interaction energy 
was already considered in \cite{BoCaFrPe15}.
We will have a special look at the Wasserstein gradient flow of 
$\mathcal F_{\delta_q} \coloneqq \mathcal D^2_{K}(\cdot, \delta_q)$
for the negative distance kernel, i.e. flows ending in $\delta_q$.
We will deduce an analytic formula for this flow and provide several examples
to illustrate its behavior.
\paragraph{Outline of the paper.} 
In Section \ref{sec:dD}, we recall the basic notation on Wasserstein gradient flows in $d$ dimensions.
Then, in Section \ref{sec:1D}, we show how these flows can be simpler treated as
gradient descent flows of an associated function on the Hilbert space $L_2((0,1))$.
MMDs are introduced in Section \ref{sec:discr}.
Then, in Section \ref{sec:discr_1d}, we restrict our attention again to the real line
and show how the associated functional
looks for the MMD with negative distance kernel.
In particular, this functional is convex.
For the Dirac measure $\nu = \delta_q$, $q \in \R$,
we give an explicit formula for the Wasserstein gradient flow of the MMD functional.
Examples illustrating the behavior of the Wasserstein flows are
provided in Section \ref{sec:ex}.
Finally, conclusions are drawn in Section \ref{sec:conc}.

\section{Wasserstein Gradient Flows}\label{sec:dD}
Let $\mathcal M(\R^d)$ denote the space of $\sigma$-additive, signed measures and 
$\mathcal P(\R^d)$ the set of probability measures.
For $\mu \in \M(\R^d)$ and measurable $T\colon\R^d \to \R^n$,
the \emph{push-forward} of $\mu$ via $T$ is given by
$T_{\#}\mu \coloneqq \mu \circ T^{-1}$.
We consider the \emph{Wasserstein space}
$
  \P_2(\R^d) 
  \coloneqq 
  \{ \mu \in \P(\R^d) \colon \int_{\R^d}\|x\|_2^2 \, \d \mu(x) < \infty \}
$
equipped with the \emph{Wasserstein distance} 
$W_2\colon\P_2(\R^d) \times \P_2(\R^d) \to [0,\infty)$,
\begin{equation}        \label{def:W_2}
  W_2^2(\mu, \nu)
  \coloneqq 
  \min_{\pi \in \Gamma(\mu, \nu)} 
  \int_{\R^d\times \R^d}
  \|x -  y\|_2^2
  \, \d \pi(x, y),
  \qquad \mu,\nu \in \P_2(\R^d),
\end{equation}
where 
$
\Gamma(\mu, \nu)
\coloneqq
\{ \pi \in \P_2(\R^d \times \R^d):
(\pi_{1})_{\#} \pi = \mu,\; (\pi_{2})_{\#} \pi = \nu\}
$ 
and $\pi_i(x) \coloneqq x_i$, $i = 1,2$ for $x = (x_1,x_2)$. The set of optimal transport plans $\pi$ 
realizing the minimum in \eqref{def:W_2} is denoted by $\Gamma^{\rm{opt}}(\mu, \nu)$.
A curve $\gamma \colon I \to \P_2(\R^d)$ on an interval $I \subset \R$, is called a \emph{geodesic} 
if there exists a constant $C \ge 0$ 
such that 
\begin{equation}
    \label{eq:geodesic}
    W_2(\gamma(t_1), \gamma(t_2)) = C |t_2 - t_1|, \qquad \text{for all } t_1, t_2 \in I.
\end{equation}
The Wasserstein space is a geodesic space, meaning that any two measures $\mu, \nu \in \P_2(\R^d)$ can be connected by a geodesic.
The \emph{regular tangent space} at $\mu \in \P_2(\R^d)$ is given by
\begin{align} \label{tan_reg}
    \T_{\mu}\mathcal P_2(\R^d)
    &\coloneqq 
      \overline{
      \left\{ \lambda (T- \Id): (\Id ,T)_{\#} \mu \in \Gamma^{\opt} (\mu , T_{\#} \mu), \; \lambda >0
      \right\} }^{L_{2,\mu}}.
\end{align}
Here $L_{2,\mu}$ denotes the Bochner space of (equivalence classes of) 
functions $\xi:\R^d \to \R^d$ with finite
$\|\xi \|_{L_{2,\mu}} ^2 \coloneqq \int_{\R^d} \|\xi(x)\|_2^2 \, \d \mu(x) < \infty$.
Note that  $\T_{\mu} \mathcal P_2(\R^d)$ is not a ``classical'' tangent space, in particular it is an
infinite dimensional subspace of $L_{2,\mu}$
if $\mu$ is absolutely continuous and just $\R^d$ if $\mu = \delta_x$, $x \in \R^d$.
In particular, this means that the Wasserstein space has only a ``manifold-like'' structure.

For $\lambda \in \mathbb R$, a function $\F\colon \P_2(\R^d) \to (-\infty,+\infty]$ is called 
\emph{$\lambda$-convex along geodesics} if, for every 
$\mu, \nu \in \dom \F \coloneqq \{\mu \in  \P_2(\R^d): \F(\mu) < \infty\}$,
there exists at least one geodesic $\gamma \colon [0, 1] \to \P_2(\R^d)$ 
between $\mu$ and $\nu$ such that
\begin{equation}  \label{def:lambda_convex}
    \F(\gamma(t)) 
    \le
    (1-t) \, \F(\mu) + t \, \F(\nu) 
    - \tfrac{\lambda}{2} \, t (1-t) \,  W_2^2(\mu, \nu), 
    \qquad  t \in [0,1].
\end{equation}
In the case $\lambda = 0$, we just speak about convex functions.
For a proper and lower semi-continuous (lsc) function $\F \colon \P_2(\R^d) \to (-\infty, \infty]$ 
and $\mu \in \P_2(\R^d)$, 
the \emph{reduced Fr\'echet subdifferential at $\mu$} 
is defined as 
{\small
\begin{equation}\label{need}
 \partial \F(\mu) \coloneqq \Big\{ \xi \in L_{2,\mu}:   \F(\nu) - \F(\mu)
    \ge 
    \inf_{{\tiny \pi \in \Gamma^{\opt}(\mu,\nu)}}
    \int\limits_{\R^{2d}}
    \langle \xi(x), y - x \rangle
    \, \d \pi (x, y)
    + o(W_2(\mu,\nu)) \; \forall \nu \in \P_2(\R^d) \Big\}.
\end{equation}
}%
A curve $\gamma\colon I \to \P_2(\R^d)$ 
is \emph{absolutely continuous}, if 
there exists a Borel velocity field $v_t\colon \R^d \to \R^d$ 
with $\int_I \| v_t \|_{L_{2,\gamma(t)}} \, \d t < + \infty$ such that
\begin{equation} \label{eq:CE}
  \partial_t \gamma(t) + \nabla_x \cdot ( v_t \, \gamma(t)) = 0
\end{equation}
on $I \times \R^d$ in the distributive sense, i.e., for all $\varphi \in C_{\mathrm c}^\infty(I \times \R^d)$
it holds
\begin{equation} \label{eq:CE_distr}
    \int_I \int_{\R^d} \partial_t \varphi(t, x) + v_t(x) \cdot \nabla_x \, \varphi(t, x) \, \d \gamma(t) \, \d t = 0.
\end{equation}
A locally absolutely continuous curve  
  $\gamma \colon (0,+\infty) \to \P_2(\R^d)$ 
  with velocity field $v_t \in \T_{\gamma(t)} \mathcal P_2(\R^d)$
  is called a \emph{Wasserstein gradient flow 
  with respect to} $\F\colon \P_2(\R^d) \to (-\infty, +\infty]$
  if 
  \begin{equation}\label{wgf}
      v_t  \in - \partial \F(\gamma(t)), \quad \text{for a.e. } t > 0.
  \end{equation}
	
\section{Wasserstein Gradient Flows on the Line}\label{sec:1D}
Now we restrict our attention to $d=1$, i.e., we work on the real line. 
We will see that the above notation simplifies since there is an isometric embedding 
of $\P_2(\R)$ into $L_2( (0,1) )$.
To this end, we consider 
the \emph{cumulative distribution function} 
$R_{\mu}\colon \R \to [0,1]$ of $\mu \in \P_2(\R)$,
which is defined by
$R_{\mu}(x) \coloneqq \mu((-\infty, x])$, $x \in \R$.
It is non-decreasing and right-continuous with
$\lim_{x\to -\infty} R_{\mu}(x) = 0$ 
as well as $\lim_{x\to \infty} R_{\mu}(x)=1$.
The \emph{quantile function} $Q_{\mu}\colon(0,1) \to \R$ 
is the generalized inverse of $R_\mu$ given by
\begin{equation}\label{quantile}
    Q_{\mu}(p) \coloneqq   \min \{ x \in \R \colon R_{\mu}(x) \ge p \}, \qquad p \in (0,1).
\end{equation}
It is non-decreasing and left-continuous.
The quantile functions form a convex cone 
$\mathcal C( (0,1) ) \coloneqq \{Q \in  L_2( (0,1) ): Q \text{ nondecreasing} \}$ 
in $L_2( (0,1) )$. 
Note that both the distribution and quantile functions are continuous except for at most countably many jumps. 
For a good overview see \cite[§~1.1]{RoRo14}.
By the following theorem, the mapping $\mu \mapsto Q_\mu$ is an isometric embedding 
of $\P_2(\R)$ into $L_2( (0,1) )$.

\begin{theorem}[{\cite[Thm~2.18]{Vil03}}]\label{prop:Q}
    For $\mu, \nu \in \P_2(\R)$,
    the quantile function $Q_{\mu} \in \mathcal C( (0,1) )$ 
		satisfies  $\mu = (Q_{\mu})_{\#} \lambda_{(0,1)}$
   	and 
    \begin{equation}
        W_2^2(\mu, \nu) = \int_{0}^1 |Q_{\mu}(s) - Q_{\nu}(s)|^2 \d s.
    \end{equation}
\end{theorem}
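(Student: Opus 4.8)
The plan is to verify the two claims in turn, both of which rest on the elementary equivalence
\begin{equation}
  Q_{\mu}(p) \le x \iff p \le R_{\mu}(x), \qquad p \in (0,1), \ x \in \R .
\end{equation}
This follows directly from the definition $Q_{\mu}(p) = \min\{x : R_{\mu}(x) \ge p\}$ together with the monotonicity and right-continuity of $R_{\mu}$. Indeed, if $R_{\mu}(x) \ge p$ then $x$ lies in the set over which the minimum is taken, so $Q_{\mu}(p) \le x$; conversely, right-continuity guarantees that the minimum $x_0 \coloneqq Q_{\mu}(p)$ is attained with $R_{\mu}(x_0) \ge p$, whence $R_{\mu}(x) \ge R_{\mu}(x_0) \ge p$ for every $x \ge x_0$ by monotonicity.

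First I would establish the push-forward identity. Writing $\sigma \coloneqq (Q_{\mu})_{\#} \lambda_{(0,1)}$, the above equivalence gives for its cumulative distribution function
\begin{equation}
  R_{\sigma}(x) = \lambda_{(0,1)}\bigl(\{p : Q_{\mu}(p) \le x\}\bigr) = \lambda_{(0,1)}\bigl(\{p : p \le R_{\mu}(x)\}\bigr) = R_{\mu}(x) .
\end{equation}
Since $\sigma$ and $\mu$ share the same distribution function, they coincide, and $Q_{\mu} \in \C((0,1))$ because $Q_{\mu}$ is non-decreasing.

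Next I would address the isometry. The map $p \mapsto (Q_{\mu}(p), Q_{\nu}(p))$ defines a coupling $\pi \coloneqq (Q_{\mu}, Q_{\nu})_{\#} \lambda_{(0,1)} \in \Gamma(\mu,\nu)$ by the first part, and by the push-forward formula its transport cost equals exactly $\int_0^1 |Q_{\mu}(s) - Q_{\nu}(s)|^2 \, \d s$. It therefore suffices to show $\pi \in \Gamma^{\opt}(\mu,\nu)$. Expanding $|x-y|^2 = x^2 - 2xy + y^2$ and noting that $\int x^2 \, \d \pi = \int x^2 \, \d \mu$ and $\int y^2 \, \d \pi = \int y^2 \, \d \nu$ are finite (as $\mu,\nu \in \P_2(\R)$) and fixed over all of $\Gamma(\mu,\nu)$, minimizing the cost is equivalent to maximizing the correlation $\int_{\R^2} xy \, \d \pi$. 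The support of our $\pi$ is monotone, a direct consequence of the monotonicity of both $Q_{\mu}$ and $Q_{\nu}$, and the Hardy--Littlewood rearrangement inequality shows that the monotone coupling maximizes this correlation among all elements of $\Gamma(\mu,\nu)$.

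The main obstacle is precisely this last optimality step. The cleanest route is via $c$-cyclical monotonicity: any coupling whose support is $c$-cyclically monotone for the cost $c(x,y) = |x-y|^2$ is optimal, and on the line the monotonicity of the support of $\pi$ implies cyclical monotonicity for this convex cost. Alternatively, one argues directly that if a competing plan is not monotone, a local swap of two mass elements strictly decreases the cost, contradicting optimality — this is exactly the content packaged by the rearrangement inequality. Once optimality is secured, the cost computation above yields the stated formula $W_2^2(\mu,\nu) = \int_0^1 |Q_{\mu}(s) - Q_{\nu}(s)|^2 \, \d s$.
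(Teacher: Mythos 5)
Your argument is correct. The paper itself offers no proof of this statement --- it is imported verbatim from Villani \cite[Thm~2.18]{Vil03} --- and what you have written is a sound rendition of the standard textbook argument: the Galois-type equivalence $Q_{\mu}(p)\le x \iff p\le R_{\mu}(x)$ (where you rightly use right-continuity of $R_\mu$ to see that the minimum in the definition of $Q_\mu$ is attained) gives the push-forward identity, and optimality of the monotone coupling $(Q_\mu,Q_\nu)_{\#}\lambda_{(0,1)}$ follows from $c$-cyclical monotonicity of its support for the convex cost $|x-y|^2$, which is exactly the route the cited reference takes. The only point worth making explicit is that $Q_\mu\in L_2((0,1))$, i.e.\ $\int_0^1 Q_\mu(s)^2\,\d s=\int_{\R}x^2\,\d\mu(x)<\infty$, which is immediate from the push-forward identity and $\mu\in\P_2(\R)$, so that the cost of your coupling is finite and the claimed identity is an equality of finite quantities.
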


Next we will see that instead of working with functionals 
$\F: \P_2(\R) \to (-\infty,+\infty]$, we can just deal with associated
functionals ${\rm F}\colon L_2( (0,1) ) \to (-\infty,\infty]$ fulfilling
${\rm F}(Q_\mu) \coloneqq \F(\mu)$.
Note that ${\rm F}$ is defined in this way only on $\mathcal C( (0,1) )$, and there exist several continuous extensions to
the whole linear space $L_2((0,1))$.
Instead of the extended Fr\'echet subdifferential \eqref{need}, 
we will use the \emph{regular subdifferential} in $L_2((0,1))$ defined by 
\begin{equation} \label{eq:subdiff}
    \partial G(f) \coloneqq \
    \bigl\{ h \in L_2((0,1)) 
    : G(g) \ge G(f) + \langle h, g-f \rangle + o(\|g-f\|_{L_2}) \; \forall  g\in L_2((0,1))\bigr\}.
\end{equation}
The following theorem characterizes Wasserstein gradient flows by this regular subdifferential and 
states a convexity relation between $\F: \P_2(\R) \to (-\infty,+\infty]$ and the associated functional ${\rm F}$.

\begin{theorem}\label{thm:L2_representation}
{\rm i)}  Let $\gamma\colon(0,\infty)\to\P_2(\R)$ be a locally absolutely continuous curve
	and ${\rm F}\colon L_2((0,1)) \to (-\infty,\infty]$
  such that
  the pointwise derivative $\partial_t Q_{\gamma(t)}$ exists and fulfills the $L_2$ subgradient equation
      \begin{equation}
      \partial_t Q_{\gamma(t)}\in - \partial {\rm F}(Q_{\gamma(t)}),\quad\text{for almost every }t\in(0,+\infty).
    \end{equation}
 Then $\gamma$ is a Wasserstein gradient flow with respect to the functional 
$\F: \P_2(\R) \to (-\infty,+\infty]$ defined by $\F(\mu) \coloneqq {\rm F}(Q_\mu)$.
\\
{\rm ii)} If ${\rm F}: \mathcal C((0,1)) \to (-\infty,\infty]$ is convex, then $\F(\mu) \coloneqq {\rm F}(Q_\mu)$
is convex along geodesics.
\end{theorem}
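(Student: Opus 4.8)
The plan is to push everything through the isometry $\mu\mapsto Q_\mu$ of Theorem~\ref{prop:Q} and to use that on the line the Wasserstein geometry is precisely the flat geometry of the convex cone $\C((0,1))$. I would dispose of part~ii) first, since it is the shorter consequence. Given $\mu,\nu\in\dom\F$, set $Q_t\coloneqq(1-t)Q_\mu+tQ_\nu$. As $\C((0,1))$ is a convex cone, $Q_t$ is again nondecreasing, hence the quantile function of $\gamma(t)\coloneqq(Q_t)_\#\lambda_{(0,1)}\in\P_2(\R)$. Theorem~\ref{prop:Q} then gives $W_2(\gamma(t_1),\gamma(t_2))=\|Q_{t_1}-Q_{t_2}\|_{L_2}=|t_1-t_2|\,W_2(\mu,\nu)$, so $\gamma$ is a geodesic, and convexity of ${\rm F}$ on $\C((0,1))$ yields
\[
\F(\gamma(t))={\rm F}(Q_t)\le(1-t)\,{\rm F}(Q_\mu)+t\,{\rm F}(Q_\nu)=(1-t)\,\F(\mu)+t\,\F(\nu),
\]
which is exactly \eqref{def:lambda_convex} with $\lambda=0$.

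For part~i) the strategy is to construct the Eulerian velocity field from the Lagrangian one and to translate both the tangent-space condition and the subgradient inclusion. The natural candidate is the field $v_t$ defined $\gamma(t)$-a.e.\ by $v_t\circ Q_{\gamma(t)}=\partial_t Q_{\gamma(t)}$; this makes sense because $\gamma(t)=(Q_{\gamma(t)})_\#\lambda_{(0,1)}$ identifies $L_{2,\gamma(t)}$ isometrically with $L_2((0,1))$ through $\xi\mapsto\xi\circ Q_{\gamma(t)}$. I would first check that this $v_t$ solves the continuity equation \eqref{eq:CE}: after the substitution $x=Q_{\gamma(t)}(p)$ the weak form \eqref{eq:CE_distr} reduces to the chain-rule identity $\partial_t\bigl(\varphi\circ Q_{\gamma(t)}\bigr)=\bigl(\partial_t\varphi+v_t\,\partial_x\varphi\bigr)\circ Q_{\gamma(t)}$, whose $p$- and $t$-integral vanishes by compact support. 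Next, $v_t\in\T_{\gamma(t)}\P_2(\R)$: under the above isometry the regular tangent space \eqref{tan_reg} corresponds to the tangent cone of $\C((0,1))$ at $Q_{\gamma(t)}$, since on the line optimal maps are exactly the monotone ones and $(T-\Id)\circ Q_\mu=Q_{T_\#\mu}-Q_\mu$; and the derivative $\partial_t Q_{\gamma(t)}$ of a curve lying in the convex set $\C((0,1))$ automatically belongs to that tangent cone.

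It then remains to convert the $L_2$ inclusion $\partial_t Q_{\gamma(t)}\in-\partial{\rm F}(Q_{\gamma(t)})$ into $v_t\in-\partial\F(\gamma(t))$. Writing $\mu=\gamma(t)$ and $h=\partial_t Q_{\gamma(t)}$, the monotone (optimal) coupling of $\mu$ and any $\nu$ is parametrised by $x=Q_\mu(p)$, $y=Q_\nu(p)$, so that
\[
\int_{\R^2}\langle -v_t(x),\,y-x\rangle\,\d\pi(x,y)=\int_0^1 -h(p)\,\bigl(Q_\nu(p)-Q_\mu(p)\bigr)\,\d p=\langle -h,\,Q_\nu-Q_\mu\rangle_{L_2},
\]
while $\F(\nu)-\F(\mu)={\rm F}(Q_\nu)-{\rm F}(Q_\mu)$ and $W_2(\mu,\nu)=\|Q_\nu-Q_\mu\|_{L_2}$ by Theorem~\ref{prop:Q}. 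Inserting $g=Q_\nu$ into the defining inequality \eqref{eq:subdiff} of $-h\in\partial{\rm F}(Q_\mu)$ gives ${\rm F}(Q_\nu)-{\rm F}(Q_\mu)\ge\langle -h,\,Q_\nu-Q_\mu\rangle_{L_2}+o(\|Q_\nu-Q_\mu\|_{L_2})$. Since the monotone coupling belongs to $\Gamma^{\opt}(\mu,\nu)$, its value dominates the infimum in \eqref{need}, so the reduced Fréchet subgradient inequality holds a fortiori.

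I expect the main obstacle to lie entirely in part~i), namely in justifying rigorously the identification $v_t\circ Q_{\gamma(t)}=\partial_t Q_{\gamma(t)}$ together with the continuity equation: one must match the \emph{pointwise} $t$-derivative of the quantile function assumed in the hypothesis with the metric ($L_2$) derivative of the absolutely continuous curve $t\mapsto Q_{\gamma(t)}$, and control the atoms and plateaus where $Q_{\gamma(t)}$ jumps or is flat. By contrast, the subgradient translation and part~ii) are essentially formal once the isometry and the monotone-coupling formula are available.
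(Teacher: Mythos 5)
Your part~ii) and your translation of the subgradient inequality are essentially the paper's argument: the isometry of Theorem~\ref{prop:Q} turns geodesics into line segments in $L_2((0,1))$, and the monotone coupling $(Q_\mu,Q_\nu)_\#\lambda_{(0,1)}$ is optimal, so the $L_2$ subgradient inequality dominates the infimum in \eqref{need}. That half is fine.

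The gap is in part~i), and it is exactly the point you defer to the end. You \emph{define} $v_t$ by $v_t\circ Q_{\gamma(t)}=\partial_t Q_{\gamma(t)}$, claiming that $\xi\mapsto\xi\circ Q_{\gamma(t)}$ identifies $L_{2,\gamma(t)}$ with $L_2((0,1))$. It does not: it is an isometry onto a closed \emph{subspace} of $L_2((0,1))$, namely the functions measurable with respect to the $\sigma$-algebra generated by $Q_{\gamma(t)}$. If $\gamma(t)$ has an atom, $Q_{\gamma(t)}$ has a plateau, and $v_t$ is well defined only if $\partial_t Q_{\gamma(t)}$ is a.e.\ constant on that plateau. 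Nothing in your hypotheses gives you this a priori, and without it the candidate $v_t$ does not exist as an element of $L_{2,\gamma(t)}$; the subsequent continuity-equation computation and the tangency claim (which itself leans on an unproved identification of $\T_{\gamma(t)}\P_2(\R)$ with a tangent cone of $\C((0,1))$) then have nothing to apply to. The paper closes this by arguing in the opposite direction: local absolute continuity already furnishes, via \cite[Prop~8.4.6]{BookAmGiSa05}, a tangent velocity field $v_t\in\T_{\gamma(t)}\P_2(\R)$ solving the continuity equation and satisfying $W_2(\gamma(t+h),(\Id+hv_t)_\#\gamma(t))=o(|h|)$; rewriting this distance through the quantile isometry yields $\|\partial_t Q_{\gamma(t)}-v_t\circ Q_{\gamma(t)}\|_{L_2}=0$, which simultaneously proves that $\partial_t Q_{\gamma(t)}$ factors through $Q_{\gamma(t)}$ and hands you the correct $v_t$ for free. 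You should invoke that result (or prove an equivalent statement) rather than construct $v_t$ by hand; as written, the construction is the missing step, not a routine verification.
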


\begin{proof}
i)
  Since $\gamma$ is (locally) absolute continuous,
  the velocity field $v_t$ from \eqref{eq:CE} fulfills by \cite[Prop~8.4.6]{BookAmGiSa05} for almost every $t \in (0,\infty)$ the relation
   \begin{align}
      0
      &=\lim_{h\to0}\frac{W_2(\gamma(t+h),(\mathrm{Id}+hv_t)_\#\gamma(t))}{|h|}
      \\
      &=\lim_{h\to0}\frac{W_2((Q_{\gamma(t+h)})_\#\lambda_{(0,1)},
        \bigl(Q_{\gamma(t)}+h (v_t\circ Q_{\gamma(t)}) \bigr)_\#\lambda_{(0,1)})}{|h|}\\
      &=\lim_{h\to0}\Big\|\frac{Q_{\gamma(t+h)}-Q_{\gamma(t)}}{h}-v_t\circ Q_{\gamma(t)}\Big\|_{L_2}
			=\|\partial_t Q_{\gamma(t)}-v_t\circ Q_{\gamma(t)}\|_{L_2}.
    \end{align}
  Thus, by assumption, 
  $v_t\circ Q_{\gamma(t)} \in -\partial {\rm F}(Q_{\gamma(t)})$ a.e.
  In particular, 
  for any $\mu\in\P_2(\R)$,
  we obtain
      \begin{align}
      0
      &\leq{\rm F}(Q_\mu) -{\rm F}(Q_{\gamma(t)})
      + \int_0^1 v_t(Q_{\gamma(t)}(s)) \, (Q_\mu(s)-Q_{\gamma(t)}(s)) \, \d s+o(\|Q_\mu-Q_{\gamma(t)}\|_{L_2})
      \\
      &=\mathcal F(\mu)-\mathcal F(\gamma(t))
        +\int_{\R\times\R} v_t(x) \, (y-x) \, \d \pi(x,y) +o\left(W_2(\mu,\gamma(t)) \right),
    \end{align}
   where $\pi\coloneqq (Q_{\gamma(t)},Q_\mu)_\#\lambda_{(0,1)}$.
  Since $\pi$ the unique optimal transport plan between $\gamma(t)$ and $\mu$, 
  this yields by \eqref{need} that $v_t\in -\partial \F(\gamma(t))$ showing the assertion by \eqref{wgf}.
	\\
	ii) 
	Let ${\rm F}\colon L_2((0,1))\to\R$ be convex.
	For any geodesic $\gamma: [0,1] \to \P_2(\R)$,
  since $\mu\mapsto Q_\mu$ is an isometry, 
  the curve $t\mapsto Q_{\gamma(t)}$ is a geodesic in $L_2((0,1))$ too. 
  Since $L_2((0,1))$ is a linear space,
  the convexity of ${\rm F}\colon L_2((0,1))\to\R$ yields 
  that $t\mapsto {\rm F}(Q_{\gamma(t)})={\F}(\gamma(t))$ is convex.
  Thus, $\F$ is convex along $\gamma$. \hfill \qed
\end{proof}

\begin{remark}\label{rem:convex}
If $\F\colon \P_2(\R) \to (-\infty,+\infty]$ is
proper, lsc, coercive and 
$\lambda$-convex along so-called generalized geodesics, then the
Wasserstein gradient flow starting at any $\mu_0 \in \overline{\dom \F}$
is uniquely determined and is the uniform limit of the miminizing movement scheme of
Jordan, Kinderlehrer and Otto \cite{JKO1998} when the time step size goes to zero, see
	\cite[Thm~11.2.1]{BookAmGiSa05}.
In $\mathbb R$, but not in higher dimensions, $\lambda$-convex functions along geodesics fulfill
also the stronger property that they are $\lambda$-convex along generalized geodesics,
see \cite{HGBS2022}. 
\end{remark}

\section{Discrepancies}\label{sec:discr}
We consider symmetric and \emph{conditionally positive definite kernels} 
$K\colon\R^d \times \R^d \to \R$ of order one, i.e.,
for any $n \in \N$, any pairwise different points $x^{1},\dots,x^{n} \in \R^d$ and any
$a_1, \dots, a_n \in \R$ with $\sum_{i=1}^{n} a_i = 0$
the relation
$
        \sum_{i,j=1}^n a_i a_j K(x^i, x^j) \ge 0
$
is satisfied.
Typical examples are  Riesz kernels
\begin{equation} \label{eq:riesz}
K(x,y) \coloneqq - \|x-y\|^r, \quad r \in (0,2),
\end{equation}
where we have strict inequality except for all $a_j$, $j=1,\ldots,n$ being zero.
The \emph{maximum mean discrepancy} (MMD) $\mathcal D_K^2\colon\P(\R^d) \times \P(\R^d) \to \R$ 
between two measures $\mu, \nu \in \P(\R^d)$ is defined by
    \begin{equation}         \label{eq:DK2}
        \mathcal D_K^2(\mu, \nu) \coloneqq \mathcal E_K (\mu - \nu) \quad 
    \end{equation}
    with the so-called \emph{$K$-energy} on signed measures
    \begin{equation}         \label{eq:EK}
        \mathcal E_K(\sigma) \coloneqq \frac12 \int_{\R^d} \int_{\R^d} K(x,y) \, \d \sigma(x) \d \sigma(y), \qquad \sigma \in \mathcal M(\R^d).
    \end{equation}
The relation between discrepancies and Wasserstein distances is discussed in \cite{FSVATP2018,NS2023}.
For fixed $\nu\in \P(\R^d)$, the MMD can be decomposed as
\begin{equation}
    \label{eq:dis-decomp}
    \F_\nu(\mu) = \mathcal D_K^2(\mu, \nu) 
    = \mathcal E_K(\mu) + \V_{K, \nu}(\mu) + \underbrace{\mathcal E_K(\nu)}_{\text{const.}}
\end{equation}
with the \emph{interaction energy}  on probability measures
\begin{align} \label{eq:interaction}
     \mathcal E_K(\mu) &= \frac12 \int_{\R^d} \int_{\R^d} K(x,y) \, \d \mu(x) \d \mu(y), \quad \mu \in \P_2(\R^d)
\end{align}
and the \emph{potential energy} of $\mu$
with respect to the \emph{potential} of $\nu$,
\begin{align}\label{eq:potential}
    \V_{K, \nu}(\mu) &\coloneqq \int_{\R^d} V_{K, \nu}(y) \d \mu(x), 
    \quad 
    V_{K, \nu}(x) \coloneqq - \int_{\R^d} K(x,y) \d\nu(y).       
\end{align}
In dimensions $d \ge 2$ neither $\mathcal E_K$ nor
$\mathcal D_K^2$ with the Riesz kernel are $\lambda$-convex along geodesics, see \cite{HGBS2022},
so that certain properties of Wasserstein gradient flows do not apply.
We will see that this is different on the real line.

\section{MMD Flows on the Line}\label{sec:discr_1d}
In the rest of this paper, we restrict our attention to $d=1$ and negative distance 
$K(x,y) = -|x-y|$,
i.e. to Riesz kernels with $r=1$. For fixed $\nu \in \P_2(\R)$, we consider the MMD functional
$\F_\nu\coloneqq \mathcal D_K^2(\cdot,\nu)$. Note that the unique minimizer of this functional is given by $\mu = \nu$.

\begin{lemma}\label{lem:extended_fun}
Let $\F_\nu\coloneqq \mathcal D_K^2(\cdot,\nu)$ with the negative distance kernel.
Then the convex functional ${\rm F}_\nu \colon L_2((0,1)) \to \R$  defined by
\begin{equation}
    \label{eq:disc-l2}
    {\rm F}_\nu (f) 
    \coloneqq \int_0^1 
    \Bigl((1-2s) (f(s) + Q_\nu(s)) 
    + \int_0^1 |f(s)-Q_\nu(t)| \, \d t \Bigr) \, \d s.
\end{equation}
fulfills ${\rm F}_\nu(Q_\mu)=\F_\nu(\mu)$ for all $\mu\in\P_2(\R)$. 
In particular, $\F_\nu$ is convex along (generalized) geodesics and there exists a unique
Wasserstein gradient flow.
\end{lemma}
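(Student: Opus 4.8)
The plan is to prove the three claims in turn: the convexity of ${\rm F}_\nu$, the identity ${\rm F}_\nu(Q_\mu)=\F_\nu(\mu)$, and the consequences for $\F_\nu$ drawn from Theorem~\ref{thm:L2_representation}~ii) and Remark~\ref{rem:convex}. For convexity, I would first note that the integral in \eqref{eq:disc-l2} is finite for every $f\in L_2((0,1))$: since $(0,1)$ carries finite measure, $L_2((0,1))\subset L_1((0,1))$, so the affine part is finite because $|1-2s|\le1$ and $Q_\nu\in L_2((0,1))$, while the double integral is bounded by $\|f\|_{L_1}+\|Q_\nu\|_{L_1}$. Convexity then follows termwise: the first summand is affine in $f$, and for each fixed $s,t$ the map $f\mapsto|f(s)-Q_\nu(t)|$ is convex on $L_2((0,1))$ as the composition of the convex modulus with the linear evaluation $f\mapsto f(s)$; integrating this pointwise estimate over $(s,t)\in(0,1)^2$ preserves convexity, so ${\rm F}_\nu$ is convex on all of $L_2((0,1))$, in particular on $\C((0,1))$.

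For the identity I would start from the decomposition \eqref{eq:dis-decomp}, namely $\F_\nu(\mu)=\mathcal{E}_K(\mu)+\V_{K,\nu}(\mu)+\mathcal{E}_K(\nu)$, and express each term through the quantile function by means of $\mu=(Q_\mu)_\#\lambda_{(0,1)}$ from Theorem~\ref{prop:Q}. The potential energy follows by the change of variables,
\begin{equation}
  \V_{K,\nu}(\mu)=\int_\R\int_\R|x-y|\,\d\nu(y)\,\d\mu(x)
  =\int_0^1\int_0^1|Q_\mu(s)-Q_\nu(t)|\,\d t\,\d s .
\end{equation}
For the interaction energy I would write $\mathcal{E}_K(\mu)=-\tfrac12\int_0^1\int_0^1|Q_\mu(s)-Q_\mu(r)|\,\d s\,\d r$ and exploit that $Q_\mu$ is nondecreasing, so that $|Q_\mu(s)-Q_\mu(r)|=Q_\mu(s)-Q_\mu(r)$ for $s>r$. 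Splitting the unit square into $\{s>r\}$ and $\{s<r\}$ and applying Fubini gives
\begin{equation}
  \int_0^1\int_0^1|Q_\mu(s)-Q_\mu(r)|\,\d s\,\d r
  =2\int_0^1(2s-1)\,Q_\mu(s)\,\d s ,
\end{equation}
whence $\mathcal{E}_K(\mu)=\int_0^1(1-2s)\,Q_\mu(s)\,\d s$, and likewise for $\mathcal{E}_K(\nu)$ with $Q_\nu$. Summing the three contributions reproduces the integrand of \eqref{eq:disc-l2} with $f=Q_\mu$, which establishes ${\rm F}_\nu(Q_\mu)=\F_\nu(\mu)$.

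Once both are in place, the convexity of ${\rm F}_\nu$ and Theorem~\ref{thm:L2_representation}~ii) yield that $\F_\nu(\mu)={\rm F}_\nu(Q_\mu)$ is convex along geodesics, and Remark~\ref{rem:convex} upgrades this on the line to convexity along generalized geodesics. To apply the uniqueness statement of Remark~\ref{rem:convex}, I would finally record that $\F_\nu$ is proper and finite on $\P_2(\R)$ (all three terms above are finite for $\mu,\nu\in\P_2(\R)$), lower semicontinuous, and coercive, so that \cite[Thm~11.2.1]{BookAmGiSa05} provides the unique Wasserstein gradient flow.

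I expect the interaction-energy identity to be the main obstacle: one must justify Fubini and the change of variables, which rests on the integrability granted by $\mu,\nu\in\P_2(\R)$, and carefully use the monotonicity of $Q_\mu$ to collapse the double integral of $|Q_\mu(s)-Q_\mu(r)|$ into $\int_0^1(1-2s)Q_\mu(s)\,\d s$. The convexity, finiteness, and the properness, lower semicontinuity and coercivity verifications are comparatively routine.
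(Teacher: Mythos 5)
Your proposal is correct and follows essentially the same route as the paper: both reduce everything to quantile functions via $\mu=(Q_\mu)_\#\lambda_{(0,1)}$ and use the monotonicity of $Q_\mu$ to collapse $\int_0^1\int_0^1|Q_\mu(s)-Q_\mu(t)|\,\d s\,\d t$ into $2\int_0^1(2s-1)Q_\mu(s)\,\d s$, which is exactly the paper's chain of equalities carried out on the decomposition $\mathcal E_K(\mu)+\V_{K,\nu}(\mu)+\mathcal E_K(\nu)$ rather than on the expanded signed-measure energy directly. Your explicit convexity and finiteness checks are fine additions that the paper leaves implicit.
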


\begin{proof}
We reformulate $\mathcal F_\nu$ as
\begin{align}     \label{eq:Fnu}
  &\F_{\nu} (\mu)
       = - \frac12 \int_{\R \times \R} |x - y| 
    (\d \mu(x) - \d \nu(x))(\d \mu(y) - \d \nu(y))
    \\
    &=
    - \frac12 \int_0^1 \int_0^1 
    |Q_\mu (s)- Q_\mu(t)| 
    - 2 |Q_\mu(s)-Q_\nu(t)| + |Q_\nu(s)-Q_\nu(t)| \, \d s \, \d t
    \\
    & = \int_0^1 \int_t^1
    Q_\mu(t)-Q_\mu(s) + Q_\nu(t)-Q_\nu(s) \, \d s \, \d t
  + \int_0^1 \int_0^1 |Q_\mu(s) - Q_\nu(t) | \, \d s \, \d t
    \\
    & = \int_0^1 \int_t^1
    Q_\mu(t) + Q_\nu(t) \, \d s \, \d t
    - \int_0^1 \int_0^s Q_\mu(s) + Q_\nu(s) \,\d t \, \d s
    + \int_0^1 \int_0^1 |Q_\mu(s) - Q_\nu(t) | \, \d s \, \d t
    \\
    &= \int_0^1 
    \Bigl((1-2s) (Q_\mu(s) + Q_\nu(s)) 
    + \int_0^1 |Q_\mu(s)-Q_\nu(t)| \, \d t \Bigr) \, \d s,
    \label{eq:Fg}
\end{align}
which yields the first claim. 
The second one follows by Theorem \ref{thm:L2_representation}ii) and Remark \ref{rem:convex}.
\hfill \qed
\end{proof}

Note that the lemma cannot immediately be generalized
to Riesz kernels with $r=(1,2)$.
\\
Finally, we derive for the special choice  $\nu=\delta_q$ in $\mathcal D_K^2(\cdot,\nu)$
an analytic formula for its Wasserstein gradient flow.

\begin{proposition}    \label{th:gradientflowdisc}
Let $\F_{\delta_q}\coloneqq \mathcal D_K^2(\cdot,\delta_q)$ with the negative distance kernel.
    Then the unique Wasserstein gradient flow of $\F_{\delta_q}$ 
    starting at $\mu_0 = \gamma(0) \in \P_2(\R)$ is 
		$\gamma(t)=(g_t)_\# \lambda_{(0,1)}$, 
    where the function $g_t\colon(0,1)\to\R$ is given by
    \begin{equation}
        \label{eq:gradflowdisc}
        g_t(s) 
        \coloneqq
        \begin{cases}
            \min\{ Q_{\mu_0}(s) + 2st, q \}, & Q_{\mu_0}(s) < q, \\
            q, & Q_{\mu_0}(s) = q,\\
            \max\{ Q_{\mu_0}(s) + 2st-2t, q \}, & Q_{\mu_0}(s) > q.
        \end{cases} 
    \end{equation}
\end{proposition}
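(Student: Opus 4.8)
The plan is to reduce the problem to a pointwise (in the quantile variable $s$) differential inclusion and then to verify that \eqref{eq:gradflowdisc} solves it. First I would specialize the associated functional of Lemma~\ref{lem:extended_fun} to $\nu=\delta_q$. Since $Q_{\delta_q}\equiv q$, the inner integral collapses, $\int_0^1|f(s)-Q_{\delta_q}(t)|\,\d t=|f(s)-q|$, so \eqref{eq:disc-l2} becomes the \emph{separable} convex functional
\begin{equation}
{\rm F}_{\delta_q}(f)=\int_0^1 \phi_s(f(s))\,\d s,\qquad \phi_s(u)\coloneqq(1-2s)(u+q)+|u-q|.
\end{equation}
This separability is the key structural simplification: it decouples the evolution at each quantile level $s$ and turns the $L_2$ subgradient flow into a family of scalar inclusions.

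Next I would compute the regular subdifferential, which here coincides with the convex subdifferential because ${\rm F}_{\delta_q}$ is convex. For each fixed $s$ the scalar convex function $\phi_s$ has $\phi_s'(u)=2-2s$ for $u>q$, $\phi_s'(u)=-2s$ for $u<q$, and $\partial\phi_s(q)=[-2s,\,2-2s]$. By Theorem~\ref{thm:L2_representation}i) the sought flow $f_t\coloneqq Q_{\gamma(t)}$ must satisfy $\partial_t f_t\in-\partial{\rm F}_{\delta_q}(f_t)$, which separability lets me read pointwise as $\partial_t f_t(s)\in-\partial\phi_s(f_t(s))$ for a.e.\ $s$. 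Inspecting the three regimes, a point below $q$ moves up at constant speed $2s$, a point above $q$ moves down at speed $2-2s$, and at $u=q$ the velocity $0$ lies in the admissible set $[2s-2,2s]$, so a point reaching $q$ may remain there. This is exactly the dynamics encoded in \eqref{eq:gradflowdisc}: particles starting below $q$ travel along $Q_{\mu_0}(s)+2st$ and are clipped at $q$ (the $\min$), and symmetrically particles above $q$ move along $Q_{\mu_0}(s)+2st-2t$ and are clipped from below (the $\max$).

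Then I would verify that $g_t$ from \eqref{eq:gradflowdisc} indeed solves the inclusion. Setting $h_t(s)\coloneqq-\partial_t g_t(s)$, a direct case check gives $h_t(s)\in\partial\phi_s(g_t(s))$ for a.e.\ $s$ and a.e.\ $t$; convexity of each $\phi_s$ then upgrades this pointwise membership to the global subgradient inequality ${\rm F}_{\delta_q}(g)-{\rm F}_{\delta_q}(g_t)\ge\langle h_t,g-g_t\rangle$ for all $g$, which is precisely $\partial_t g_t\in-\partial{\rm F}_{\delta_q}(g_t)$. To invoke Theorem~\ref{thm:L2_representation}i) I must still confirm that $g_t=Q_{\gamma(t)}$ for $\gamma(t)\coloneqq(g_t)_\#\lambda_{(0,1)}$, i.e.\ that $g_t$ is non-decreasing and left-continuous in $s$: monotonicity holds because $Q_{\mu_0}(s)$ and the affine shifts $2st$, $2st-2t$ are all non-decreasing in $s$ while the $\min$/$\max$ with the constant $q$ preserve monotonicity (the three branches glue continuously, all reducing to $q$ where $Q_{\mu_0}(s)=q$), and left-continuity is inherited from $Q_{\mu_0}$. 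Since $|\partial_t g_t(s)|\le 2$, the curve $\gamma$ is locally absolutely continuous, so Theorem~\ref{thm:L2_representation}i) identifies it as a Wasserstein gradient flow, and uniqueness follows from the convexity along generalized geodesics established in Lemma~\ref{lem:extended_fun} together with Remark~\ref{rem:convex}.

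The main obstacle I anticipate is not computing the flow but handling the non-smooth point $u=q$ correctly: one must check that $0\in[2s-2,2s]$ for every $s\in(0,1)$ so that clipped particles genuinely remain at $q$, and that the inclusion persists at the clipping instants despite the kink of $t\mapsto g_t(s)$ (these instants form a measure-zero set of times, which suffices for the ``almost every $t$'' requirement). The passage from pointwise subgradient membership to the $L_2$ subdifferential rests essentially on separability plus convexity, so I would also make sure the measurable selection $h_t$ lies in $L_2$ and that the subgradient inequality integrates correctly over $(0,1)$.
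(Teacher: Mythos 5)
Your proposal is correct and follows essentially the same route as the paper: specialize the associated functional to $\nu=\delta_q$, read the $L_2$ subgradient inclusion pointwise in $s$ via the separable structure, verify that $\partial_t g_t(s)$ matches $-\partial\phi_s(g_t(s))$ in each of the three regimes (including that $0$ is an admissible velocity at the clipped state $q$), and conclude with Theorem~\ref{thm:L2_representation} and the convexity from Lemma~\ref{lem:extended_fun}. Your explicit checks of monotonicity of $g_t$ and of the passage from pointwise to $L_2$ subdifferential membership are details the paper leaves implicit, but the argument is the same.
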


\begin{proof}
    First, note that $g_t\in \mathcal C((0,1))$ such that it holds $g_t=Q_{\gamma(t)}$.
    Since $Q_{\delta_q} \equiv q$, 
    the subdifferential of ${\rm F}_{\delta_q}$ in \eqref{eq:disc-l2} at $g_t$
    consists of all functions
    \begin{equation}
        h(s) 
        =
       \begin{cases}
            -2s, & Q_{\mu_0}(s) < q \text{ and } t < \frac{q-Q_{\mu_0}(s)}{2s},  \\
            2-2s, & Q_{\mu_0}(s) > q \text{ and } t < \frac{Q_{\mu_0}(s)-q}{2-2s},\\
            1-2s + n(s), & \text{otherwise,}
       \end{cases}
    \end{equation}
    with $-1 \le n(s) \le 1$ for $s \in (0,1)$.
    On the other hand, the pointwise derivative of $g_t$ in \eqref{eq:gradflowdisc} can be written as
    \begin{equation}
        \partial_t g_t(s)
        =
        \begin{cases}
            2s, & Q_{\mu_0}(s) < q \text{ and } t < \frac{q-Q_{\mu_0}(s)}{2s},  \\
            2s-2, & Q_{\mu_0}(s) > q \text{ and } t < \frac{Q_{\mu_0}(s)-q}{2-2s},\\
            0,& \text{otherwise,}
        \end{cases}
    \end{equation}
    such that we obtain 
    $\partial_t Q_{\gamma(t)}=\partial_t g_t\in - \partial {\rm F}_\nu(g_t)= -\partial {\rm F}_\nu(Q_{\gamma(t)})$.
    Thus, by Lemma~\ref{lem:extended_fun} and Theorem~\ref{thm:L2_representation}, we obtain that $\gamma$ is a Wasserstein gradient flow.
    It is unique since $\F_\nu$ is convex along geodesics by Theorem~\ref{thm:L2_representation}.ii, 
		Lemma \ref{lem:extended_fun} and Remark \ref{rem:convex}.     \hfill \qed
		
		\end{proof}

\section{Intuitive Examples}\label{sec:ex}
Finally, we provide some intuitive examples of Wasserstein gradient flows of 
$\F_\nu\coloneqq \mathcal D_K^2(\cdot,\nu)$ with the negative distance kernel.

\subsection{Flow between Dirac Measures}
We consider the flow of $\F_{\delta_0}$ starting  
at the initial measure $\gamma(0) = \mu_0 \coloneqq \delta_{-1}$.
Due to $Q_{\delta_0} \equiv 0$, Proposition~\ref{th:gradientflowdisc} yields the gradient flow
$\gamma(t) \coloneqq (Q_t)_\# \lambda_{(0,1)}$ given by
\begin{equation}
    \label{eq:delta_flow}
    \gamma(t) =
    \begin{cases}
        \delta_{-1}, & t=0,\\
        \frac{1}{2t}\lambda_{[-1,-1+2t]}, & 0 \le t \le \frac{1}{2},\\
        \frac{1}{2t}\lambda_{[-1,0]} + \left(1-\frac{1}{2t}\right)\delta_0, & \frac{1}{2} < t.\\
    \end{cases}
\end{equation}
For $t \in (0,\frac12]$, the initial Dirac measure becomes a uniform measure with increasing support,
and for $t \in (\frac12,1)$ it is the convex combination of a uniform measure and $\delta_0$.
A visualization of the flow is given in Figure~\ref{fig:gradient_flow_dirac_dirac_P2R}. \hfill $\Box$

\begin{figure}
    \begin{center}
        \includegraphics[width=0.6\textwidth]{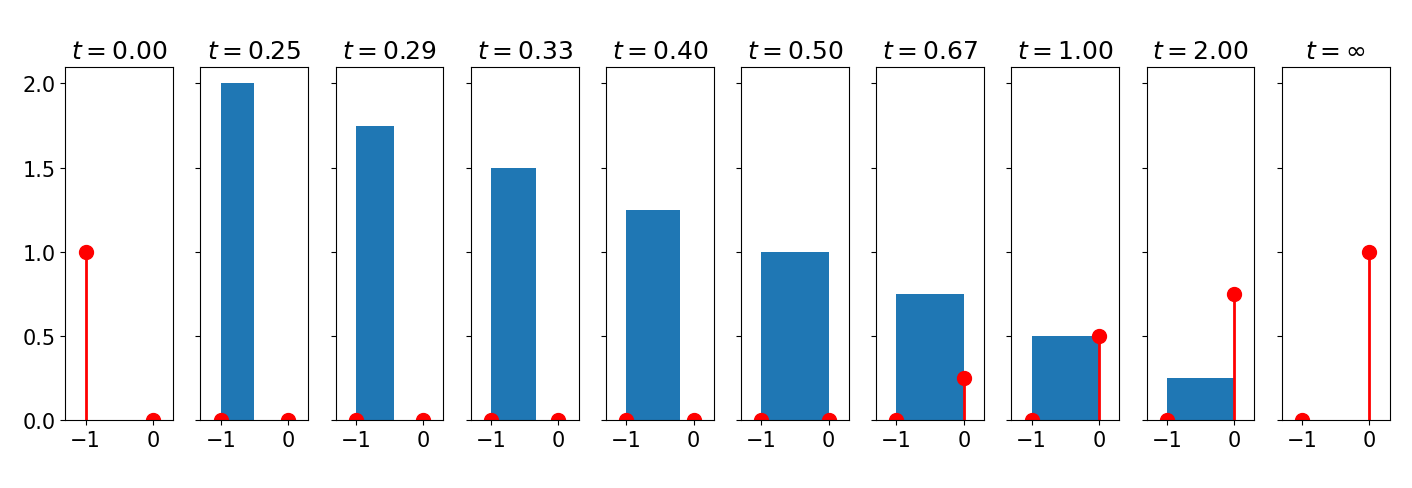}
    \end{center}
    \caption{Visualization of the Wasserstein gradient flow of $\F_{\delta_0}$
		from $\delta_{-1}$ to $\delta_0$.
    At various times $t$, the absolute continuous part is visualized by its density in blue (area equals mass) and the atomic part by the red dotted vertical line (height equals mass).
    The atomic part at the end point $x=0$ starts to grow at time $t=\tfrac12$, where the support of the density touches this point for the first time.}
    \label{fig:gradient_flow_dirac_dirac_P2R}
\end{figure}

\subsection{Flow on Restricted Sets}
Next, we are interested in the Wasserstein gradient flows
on the subsets $\mathcal S_i$, $i=1,2$, given by
\begin{enumerate}[\upshape(i)]
    \item $\mathcal S_1 \coloneqq \{ \delta_x: x \in \R\}$,
    \item 
    $\mathcal S_2 \coloneqq \{ \mu_{m, \sigma} 
    = \frac{1}{2\sqrt{3} \sigma} \lambda_{[m - \sqrt{3} \sigma, m + \sqrt{3} \sigma]}: m \in \R, \sigma \in \R_{\ge 0}\}$.
\end{enumerate}
Note that $\S_2$ is a special instance of sets of scaled and translated measures $\mu \in \P_2(\R)$ defined by
$\{ {T_{a,b}}_\# \mu: a \in \R_{\ge 0}, b \in \R\}$, 
where  $T_{a,b} (x) \coloneqq ax+b$. 
As mentioned in \cite{Ge90} the Wasserstein distance between measures $\mu_1, \mu_2$ 
from such sets has been already known to Fr\'echet:
\[
    W_2^2(\mu_1, \mu_2) = |m_{1} - m_{2}|^2 + |\sigma_{1} - \sigma_{2}|^2,
\]
where $m_{i}$ and $\sigma_{i}$ are the mean value and  standard deviation of $\mu_i$, $i=1,2$. 
This provides an isometric embedding of $\R \times \R_{\ge 0}$ into $\P_2(\R)$.
The boundary of $\mathcal S_2$ is the set of Dirac measures $\mathcal S_1$ and is isometric to $\R$.
The sets are convex in the sense that for
$\mu,\nu \in \mathcal S_i$ all geodesics $\gamma:[0,1] \to \P(\R)$ with $\gamma(0) = \mu$ and $\gamma(1)= \nu$ are in $\mathcal S_i$, $i \in \{1,2\}$.
For $i=1,2$, we consider
\begin{equation} \label{nocheinbsp}
\mathcal F_{i,\nu} (\mu)
\coloneqq 
\begin{cases}
\F_\nu & \mu \in \mathcal S_i,\\
+\infty & \rm{otherwise}.
\end{cases}
\end{equation} 
Due to the convexity of $\F_\nu$ along geodesics and the convexity of the sets $\mathcal S_i$, we obtain
that the functions $\F_{i,\nu}$ are convex along geodesics.

\subsubsection{Flows of $\F_{1,\nu}$}
We use the notation $f_x \equiv x$ for the constant function on $(0,1)$ with value $x$.
It is straightforward to check  that the function ${\rm F}\colon L^2((0,1))\to(-\infty,\infty]$ given by
\begin{align}
{\rm F}(f)=\begin{cases}F(x),&$if $f=f_x$ for some $x\in\R,\\+\infty,&$otherwise,$\end{cases}
\end{align}
with
$$
F(x)\coloneqq \int_\R|x-y|\,\d \nu(y)-\frac12\int_{\R\times\R}|y-z|\,\d \nu(y)\d \nu(z)
$$
fulfills ${\rm F}(Q_\mu)=\F_{1,\nu}(\mu)$.
In the following, we aim to find $x\colon[0,\infty)\to\R$ satisfying
    \[
        \dot x(t) = - \partial F(x(t)).
    \]
Since the set $\{Q_{\mu}:\mu\in\mathcal S_1\}$ is a one-dimensional linear subspace of $L_2((0,1))$
spanned by the constant one-function $f_1$,
this yields $f_{x(t)}\in-\partial {\rm F}(f_{x(t)})$ such that the Wasserstein gradient flow is by Theorem~\ref{thm:L2_representation} given by
$\gamma(t)=(f_{x(t)})_\#\lambda_{(0,1)}=\delta_{x(t)}$.
\\
In the special case $\nu = \delta_q$ for some $q \in \R$, we have
\[
F(x) = |x - q|, \qquad
\partial F(x) =
\begin{cases}
\{- 1\}, & x < q,\\
[-1,1], & x=q,\\
\{1\}, & x > q.
\end{cases}
\]
Therefore, the Wasserstein gradient flow for $x(0) = x_0 \ne 0$ is given by
\[
\gamma(t)=\delta_{x(t)},\quad\text{with}\quad
x(t) =
\begin{cases}
x_0 + t, & x_0<q,\\
x_0 - t, & x_0>q.
\end{cases}, \qquad 0 \le t < |x_0-q|
\]
and $\gamma(t)=\delta_q$ for $t \ge |x_0-q|$.

For $\nu = \frac12 \lambda_{[-1,1]}$ the gradient flow starting at $x_0 \in [-1,1]$ is
        \[
            x(t) = x_0 e^{-t}, \qquad t \ge 0,
        \]
        and converges to the midpoint of the interval for $t \to \infty$.
        If it starts at $x_0 \in \R \setminus [-1,1]$ the gradient flow is
        \[
            x(t) =
            \begin{cases}
                x_0 + t, & x_0<-1,\\
                x_0 - t, & x_0>1.
            \end{cases}, \qquad 0 \le t \le \min{|x_0-1|,|x_0+1|},
        \]
        where it reaches the nearest interval end point in finite time.
In Figure~\ref{fig:other}, we plotted the $x(t)$ for different initial values
$x(0)$.

\begin{figure}
    \begin{center}
        \includegraphics[width=0.25\textwidth]{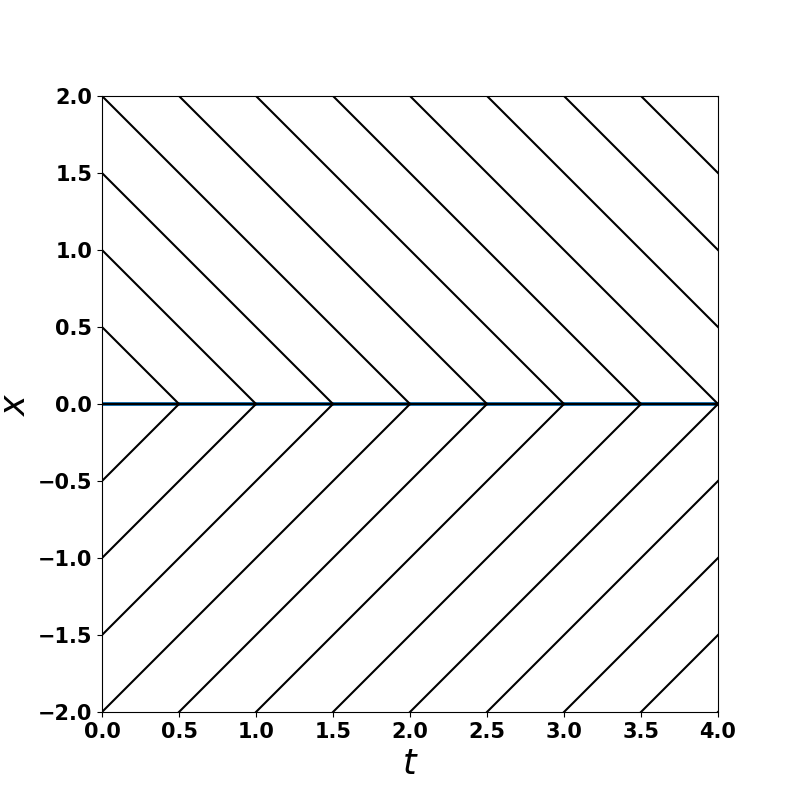}
				\hspace{1cm}
				\includegraphics[width=0.25\textwidth]{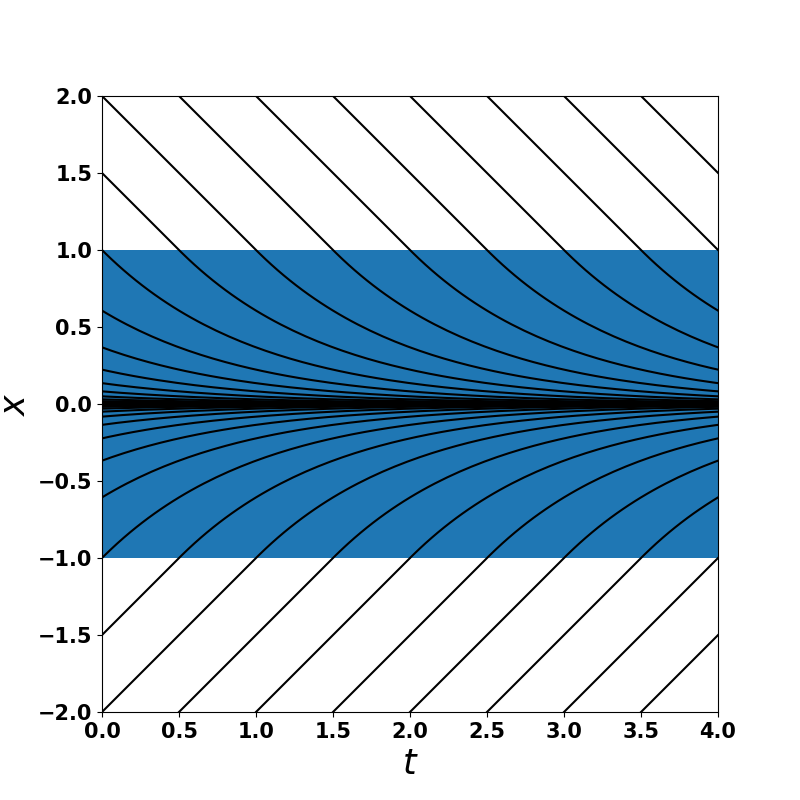}			
    \end{center}
    \caption{
		Wasserstein gradient flow of $\F_{1,\nu}$ for $\nu = \delta_0$ (left)
				and $\nu=\frac12 \lambda_{[-1,1]}$ (right) 
				from various initial points $\delta_{x}$, $x \in [-2,2]$.
				The support of the right measure $\nu$  is depicted by the blue region.
				The examples show that gradient flows may reach the optimal points in finite or infinite time.
		    }
    \label{fig:other}
\end{figure}

\subsubsection{Flows of $\F_{2,\nu}$}
We observe that $Q_{\mu_{m,\sigma}}=f_{m,\sigma}$, where $f_{m,\sigma}(x)=m+2\sqrt{3}\sigma(x-\tfrac12)$.
By Lemma~\ref{lem:extended_fun} we obtain 
that the function ${\rm F}\colon L_2((0,1))\to(-\infty,\infty]$ given by
\begin{align}
{\rm F}(f)=\begin{cases}F(m,\sigma),&$if $f=f_{m,\sigma}$ for $(m,\sigma)\in\R\times\R_{\geq0},\\+\infty,&$otherwise,$\end{cases}
\end{align}
fulfills ${\rm F}(Q_\mu)=\F_{2,\nu}(\mu)$, where
$$
F(m,\sigma)\coloneqq\int_{(0,1)}(1-2s)(f_{m,\sigma}(s)+Q_\nu(s))\d s+\int_{(0,1)^2}|f_{m,\sigma}(s)-Q_\nu(t)|\d t\d s,
$$
The set $\{f_{m,\sigma}:m,\sigma\in\R\}$ is a two dimensional linear subspace of $L_2((0,1))$ with orthonormal basis $\{f_{1,0},f_{0,1}\}$.
We aim to compute $m\colon[0,\infty)\to\R$ and $\sigma\colon[0,\infty)\to\R_{\geq0}$ with
    \begin{equation}\label{eq:ode_in_R2}
        (\dot m(t), \dot \sigma(t)) = - \partial F(m(t), \sigma(t)), \qquad t \in I \subset \R,
    \end{equation}
because this yields $f_{m(t),\sigma(t)}\in-\partial {\rm F}(f_{m(t),\sigma(t)})$ such that
$\gamma(t)=(f_{m(t),\sigma(t)})_\#\lambda_{(0,1)}=\mu_{m,\sigma}$ is by Theorem~\ref{thm:L2_representation} the Wasserstein gradient flow.

In the following, we consider the special case $\nu=\delta_0=\mu_{0,0}$. Then, the function $F$ reduces to
\begin{align}
F(m,\sigma)&=\int_\R(1-2s)(m+2\sqrt{3}\sigma(s-\tfrac12))+|m+2\sqrt{3}\sigma(s-\tfrac12)|\d s\\[-2pt]
&=-\frac{\sigma}{\sqrt{3}}+\begin{cases}|m|,&$if $|m|\geq\sqrt{3}\sigma,\\
\frac{m^2+3\sigma^2}{2\sqrt{3}\sigma^2}&$if $ |m|<\sqrt{3}\sigma,\end{cases}
\end{align}
and the subdifferential is given by
$$
\partial F(m,\sigma)=\begin{cases}\mathrm{sgn}(m)\times\{-\tfrac1{\sqrt{3}}\},
&$if $|m|\geq\sqrt{3}\sigma,\\
\{(\tfrac{m}{\sqrt{3}\sigma^2},\tfrac{-m^2}{\sqrt{3}\sigma^3} - \frac{1}{\sqrt{3}})\},
&$if $|m| < \sqrt{3}\sigma,\end{cases}
\quad \mathrm{sgn}(m)=\begin{cases}\{\tfrac{|m|}{m}\},&$if $m \ne 0,\\ [-1,1],&$if $m=0.\end{cases}
$$
We observe that $F$ is differentiable for $\sigma>0$. Thus, for any initial intial value $(m(0),\sigma(0))=(m_0,\sigma_0)$, we
can compute the trajectory $(m(t),\sigma(t))$ solving \eqref{eq:ode_in_R2} using an ODE solver.
In Figure~\ref{fig:other_1} (left), we plotted the level sets of the function $F(m,\sigma)$ as well
as the solution trajectory $(m(t),\sigma(t))$ for different initial values $(m(0),\sigma(0))$.
For $(m(0),\sigma(0))=(-1,0)$, the resulting flow is illustrated in Figure~\ref{fig:other_1}, right.

\begin{figure}    \begin{center}
        \includegraphics[width=0.35\textwidth]{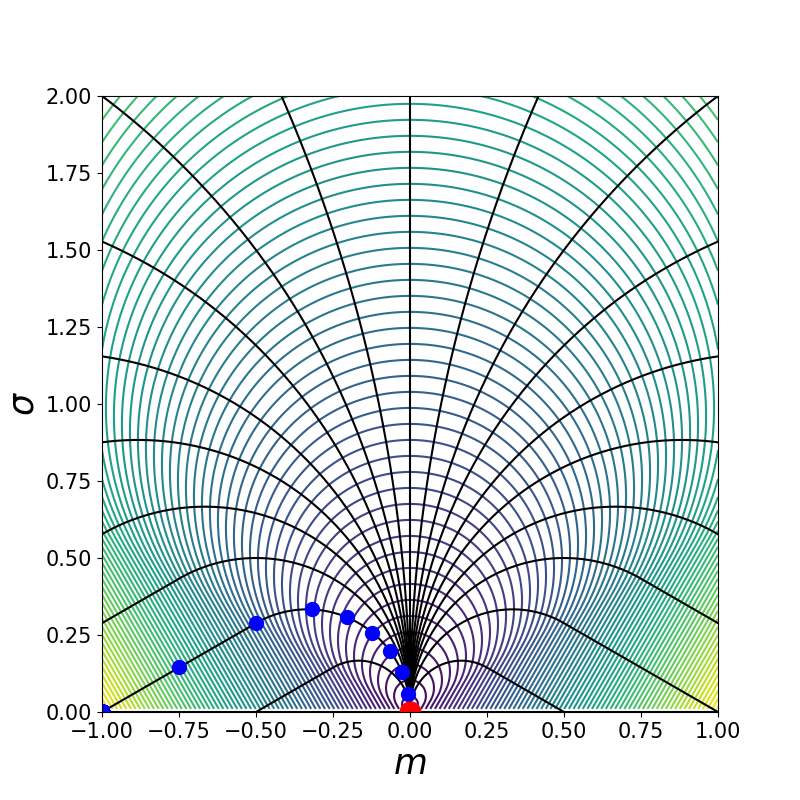} 
											\hspace{0.5cm}
				\includegraphics[width=0.6\textwidth]{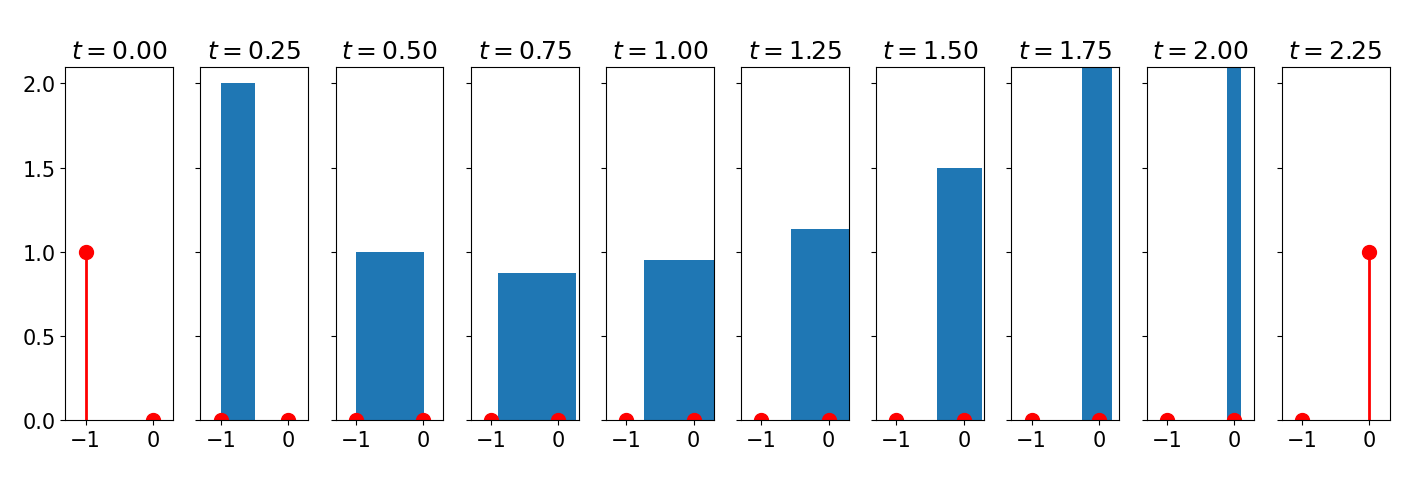}
    \end{center}
    \caption{Wasserstein gradient flow  $\F_{2,\delta_0}$ from $(m(0),\sigma(0))$ to $\delta_0$ (left) 
		and from $\delta_{-1}$ to $\delta_0$ (right). 
		In contrast Figure \ref{fig:gradient_flow_dirac_dirac_P2R} it is a uniform measure for all $t \in (0,1)$.
    }
    \label{fig:other_1}
\end{figure}

\paragraph{Flows for a Smooth Kernel}
  For smooth, positive definite kernels $K$ the MMD functional $\F_{\nu} \coloneqq \mathcal D_K^2(\cdot, \nu)$ is in general not convex and leads to a more complex energy landscape than for the negative distance kernel. 
This may lead to problems for optimization algorithms.
  To illustrate this observation, we let $\nu \coloneqq \lambda_{[-1,1]}$ and compare the energy landscape of the restricted functional 
	$\F_{2,\nu}$ for $K(x,y) \coloneqq -|x-y|$ and the kernel 
\begin{equation}
\label{eq:rbf_kernel}
     \tilde K(x,y) \coloneqq
            \begin{cases}
            (1-\tfrac12 |x-y|)^2 (|x-y| + 1), & |x-y| \le 2,\\
                0, & \text{else.}
            \end{cases}
\end{equation}
  In contrast to the negative distance kernel $K$, the kernel $\tilde K$ is positive definite (without restrictions on the $a_i$), cf. \cite{wendland2005}, and has a Lipschitz continuous gradient.
  The two energy landscapes of $\F_{2,\nu}$ are visualized in Figure~\ref{fig:energylandscape_comparison}. The non-convexity of $\F_{\nu}$ for $\tilde K$ is readily seen by the presence of a saddle point for $\F_{2,\nu}$ at $\mu = \delta_0$ (equivalently to $(m,\sigma) = (0,0)$ in the $m\sigma$-plane).
Note that any Wasserstein gradient flow of $\F_{\nu}$ starting at a Dirac measure $\delta_x$ converges to this saddle point $\mu = \delta_0$.

  \begin{figure}
    \begin{center}
        \includegraphics[width=0.35\textwidth]{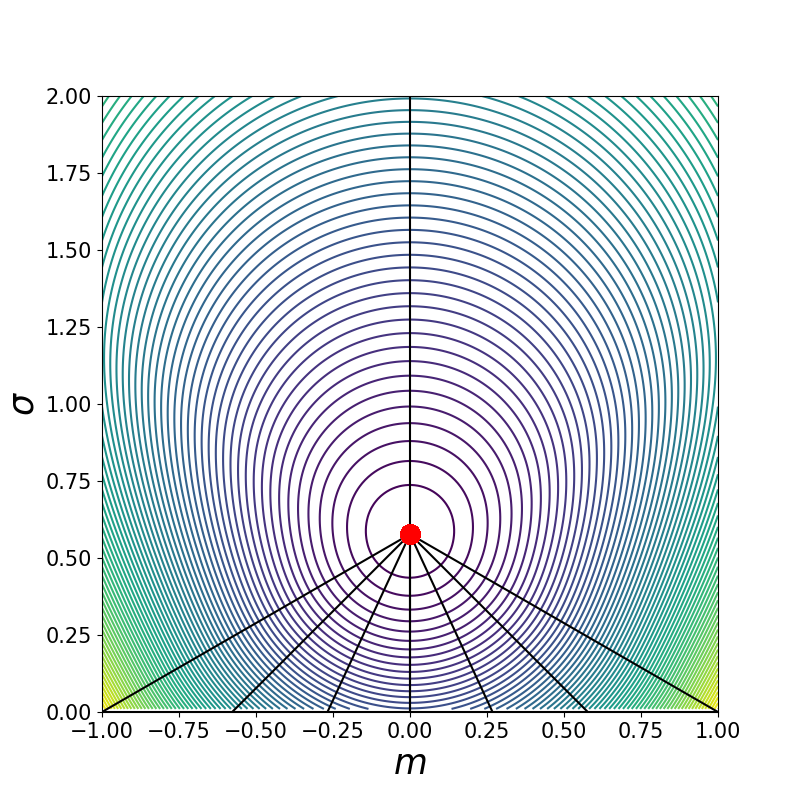}
        \hspace{1cm}
        \includegraphics[width=0.35\textwidth]{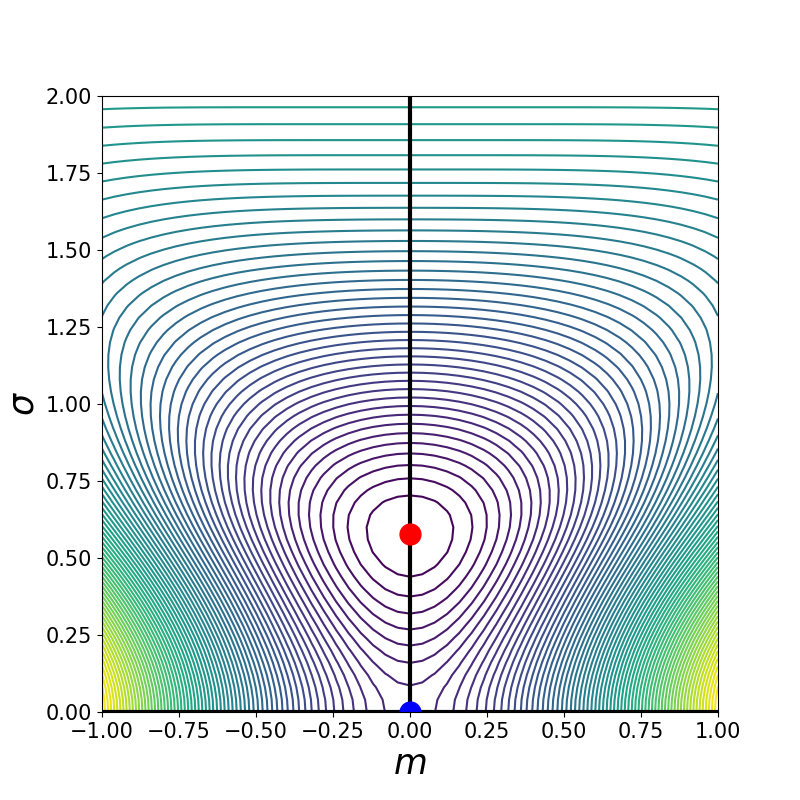}
    \end{center}
    \caption{Visualization of the energy landscapes of $\F_{2,\lambda_{[-1,1]}}$ for the convex negative distance kernel (left) and the non-convex, smooth kernel given in \eqref{eq:rbf_kernel}. 
		The red dot is the global minimizer $\lambda_{[-1,1]}$ (left and right) and the blue point (right) is the saddle point $\delta_0$. The black lines depict selected gradient flows.
    }
    \label{fig:energylandscape_comparison}
\end{figure}

\section{Conclusions} \label{sec:conc}
We provided insight into Wasserstein gradient flows of MMD functionals with negative distance kernels
and characterized in particular flows ending in a Dirac measure.
We have seen that such flows are not simple particle flows, e.g. starting in another Dirac measure the flow becomes immediately
uniformly distributed and after a certain time a mixture of a uniform and a Dirac measure.
In our future work, we want to extend our considerations to empirical measures and incorporate deep learning techniques
as in \cite{AHS2023}.
Also the treatment of other functionals which incorporate an interaction energy part appears to be interesting.
Further, we may combine univariate techniques with multivariate settings using Radon transform like techniques 
as in \cite{BCSD2022,LSMDS2019,NHPB2021}.

\bibliographystyle{splncs04}
\bibliography{references}
\end{document}